\renewcommand\@biblabel[1]{}
\newcommand{\R}{\mathds{R}}
\newcommand{\E}{\mathds{E}}
\newcommand{\N}{\mathds{N}}
\newcommand{\Z}{\mathds{Z}}
\renewcommand{\P}{\mathds{P}}
\newcommand{\W}{\mathds{W}}
\newcommand{\Bcr}{\mathscr{B}}
\newcommand{\Lcr}{\mathscr{L}}
\newcommand{\Mcr}{\mathscr{M}}
\newcommand{\Acr}{\mathscr{A}}
\newcommand{\Wcr}{\mathscr{W}}
\newcommand{\Zcr}{\mathscr{Z}}
\newcommand{\D}{{\rm d}}
\def\simiid{\stackrel{\mbox{\scriptsize{iid}}}{\sim}}
\newcommand{\indic}{\mathds{1}}
\newtheorem{Theorem}{Theorem}[section]
\newtheorem{Definition}{Definition}[section]
\newtheorem{Corollary}{Corollary}[section]
\newtheorem{Remark}{Remark}[section]
\newenvironment{proof}{\noindent \textit{Proof.}}{\hfill$\square$}
\begin{document}

\title{\bf {\Large{On Johnson's ``sufficientness" postulates for features-sampling models}}}

\author[,1]{Federico Camerlenghi \thanks{Also affiliated to Collegio Carlo Alberto, Piazza V. Arbarello 8, Torino, and BIDSA, Bocconi University, Milano, Italy; federico.camerlenghi@unimib.it}}
\author[,2]{Stefano Favaro \thanks{Also affiliated to Collegio Carlo Alberto, Piazza V. Arbarello 8, Torino, and IMATI-CNR ``Enrico  Magenes", Milan, Italy; stefano.favaro@unito.it}}

\affil[1]{University of Milano - Bicocca, Piazza dell'Ateneo Nuovo 1, Milano}
\affil[2]{University of Torino, Corso Unione Sovietica 218/bis, Torino}

\date{}
\maketitle
\thispagestyle{empty}

\setcounter{page}{1}

\begin{abstract}
In the 1920's, the English philosopher W.E. Johnson introduced a characterization of the symmetric Dirichlet prior distribution in terms of its predictive distribution. This is typically referred to as Johnson's ``sufficientness'' postulate, and it has been the subject of many contributions in Bayesian statistics, leading to predictive characterization for infinite-dimensional generalizations of the Dirichlet distribution, i.e. species-sampling models. In this paper, we review ``sufficientness'' postulates for species-sampling models, and then investigate analogous predictive characterizations for the more general features-sampling models. In particular, we present a ``sufficientness'' postulate for a class of features-sampling models referred to as Scaled Processes (SPs), and then discuss analogous characterizations in the general setup of  features-sampling models.
\end{abstract}

\noindent\textsc{Keywords}: {Bayesian nonparametrics; exchangeability; features-sampling model; de Finetti theorem; Johnson's ``sufficientness'' postulate; predictive distribution; scaled process prior; species-sampling model.} 

\maketitle

\section{Introduction}
Exchangeability (\citet{definetti}) provides a natural modeling assumption in a large variety of statistical problems, and it amounts to assume that the order in which observations are recorded is not relevant. Consider a sequence of random variables $(Z_{j})_{j \geq 1 }$ defined on a common probability space $(\Omega, \Acr, \P)$ and taking values in an arbitrary space, which is assumed to be Polish. The sequence $(Z_{j})_{j \geq 1 }$ is exchangeable if and only if 
\[
(Z_1, \ldots , Z_n) \stackrel{\rm d}{=} (Z_{\sigma (1)}, \ldots , Z_{\sigma (n)})
\]
for any permutation $\sigma$ of the set $\{ 1, \ldots , n \}$ and any $n \geq 1$.  By virtue  of the celebrated de Finetti representation theorem, exchangeability of $(Z_{j})_{j \geq 1 }$ is tantamount to assert the existence of a random element $\tilde{\mu}$, defined on a (parameter) space $\Theta$, such that conditionally on $\tilde{\mu}$ the $Z_{j}$s are independent and identically distributed with common distribution $p_{\tilde{\mu}}$, i.e., 
\begin{equation}
\label{eq:exchangeability}
\begin{split}
Z_j\, |\, \tilde{\mu}  & \simiid   p_{\tilde{\mu}}  \quad j \geq 1  \\
\tilde{\mu}  & \sim \Mcr,
\end{split}  
\end{equation}
where $\Mcr$ is the distribution of $\tilde{\mu}$. In a Bayesian setting, $\Mcr$ takes on the interpretation of a prior distribution for the parameter object of interest. In this sense, the de Finetti representation theorem is a natural framework for Bayesian statistics. For mathematical convenience, $\Theta$ is assumed to be a Polish space, equipped with the Borel $\sigma$-algebra $\Bcr (\Theta)$. Hereafter, with the term \textit{parameter} we refer to both a finite and an infinite dimensional object.

Within the framework of exchangeability \eqref{eq:exchangeability}, a critical role is played by the predictive distributions, namely the conditional distributions of the $(n+1)$th observation $Z_{n+1}$, given $\bm{Z}_n := (Z_1, \ldots , Z_n)$. The problem of characterizing prior distributions $ \Mcr$ in terms of their predictive distributions has a long history in Bayesian statistics, starting from the seminal work of the English philosopher \citet{johnson1932} who provided a predictive characterization of the symmetric Dirichlet prior distribution. Such a characterization is typically referred to  as Johnson's ``sufficientness'' postulate. Species-sampling models (\citet{Pitman96species}) provide arguably the most popular infinite-dimensional generalization of the Dirichlet distribution. They form a broad class of nonparametric prior models which correspond to assume that $p_{\tilde{\mu}}$ in \eqref{eq:exchangeability} is an almost surely discrete random probability measure
\begin{equation}
\label{eq:ssm_def}
\tilde{p}= \sum_{i \geq 1} \tilde{p}_i \delta_{\tilde{z}_i},
\end{equation}
where: i) $(\tilde{p}_i)_{i \geq 1}$ are non-negative random weights summing up to $1$ almost surely; ii) $(\tilde{z}_i)_{i \geq 1}$ are random species' labels, independent of $(\tilde{p}_i)_{i \geq 1}$, and i.i.d. with common (non-atomic) distribution $P$. The term \textit{species} refers to the fact that the law of $\tilde{p}$ is a prior distribution for the unknown species composition $(\tilde{p}_i)_{i \geq 1}$ of a population of individuals $Z_{j}$s, with $Z_{j}$ belonging to a species $\tilde{z}_{i}$ with probability $\tilde{p}_{i}$, for $j,i\geq1$. In the context of species-sampling models, \citet{Regazzini1978} and \citet{Lo1991}, provided a ``sufficientness'' postulate for the Dirichlet process (\citet{ferguson1973bayesian}). Such a characterization has then been extended by \citet{Zabell2005} to the Pitman-Yor process (\citet{Perman92,pityor_97}), and by \citet{marcoB} to the more general Gibbs-type prior models (\citet{gnepit_06}).

In this paper, we introduce and discuss Johnson's ``sufficientness'' postulates in the features-sampling setting, which generalizes the species-sampling setting by allowing each individual of the population to belong to multiple species, now called features. We point out that feature-sampling models are extremely important in different applied areas, see, e.g.
\citet{Gri_11,Ayed2021} and references therein.
Under the framework of exchangeability \eqref{eq:exchangeability}, the features-sampling setting assumes that 
\begin{equation} \label{eq:counting_measure}
Z_j | \tilde{\mu}   = \sum_{i \geq 1} A_{j,i} \delta_{\tilde{w}_i}  \sim p_{\tilde{\mu}},
\end{equation}
and
\begin{displaymath}
\tilde{\mu} = \sum_{i \geq 1} \tilde{p}_i \delta_{\tilde{w}_i}
\end{displaymath}
where: i) conditionally on $\tilde{\mu}$, $(A_{j,i})_{i \geq 1}$ are independent Bernoulli random variables with parameters $(\tilde{p}_i)_{i\geq 1}$; ii) $(\tilde{p}_i)_{i \geq 1}$ are $(0,1)$-valued random weights; iii) $(\tilde{w}_i)_{i \geq 1}$ are random features' labels, independent of $(\tilde{p}_i)_{i \geq 1}$, and i.i.d. with common (non-atomic) distribution $P$. That is, individual $Z_j$ displays feature $\tilde{w}_i$ if and only if $A_{j,i}=1$, which happens with probability $\tilde{p}_i$. 
For example, if, conditionally on $\tilde{\mu}$, $Z_j $ displays only two features,  say $\tilde w_1$ and $\tilde w_5$, it equals the random measure $\delta_{\tilde w_1}+\delta_{\tilde w_5}$.
The distribution $p_{\tilde{\mu}}$ is the law of a Bernoulli process with parameter $\tilde{\mu}$, which is denoted by ${\rm BeP} (\tilde{\mu})$, whereas the law of $\tilde{\mu}$ is a nonparametric prior distribution for the unknown feature probabilities $(\tilde{p}_i)_{i \geq 1}$, i.e. a features-sampling model. Here, we investigate the problem of characterizing prior distributions for $\tilde{\mu}$ in terms of their predictive distributions, with the goal to provide ``sufficientness'' postulates for features-sampling models. We discuss such a problem, and present partial results for a class of features-sampling models referred to as Scaled Process (SP) priors for $\tilde{\mu}$ (\citet{james2015scaled,camerlenghi2021scaled}). With these results, we aim at stimulating future research in this field, to obtain ``sufficientness'' postulates for general features-sampling models.

The paper is structured as follows. In Section \ref{sec:species}, we present a brief review on Johnson's ``sufficientness'' postulates for species-sampling models. Section \ref{sec:BeP} focuses on nonparametric prior models for the Bernoulli process, i.e. features-sampling models: we review their definitions, properties and sampling structures. In Section \ref{sec:SP_predictive} we present a ``sufficientness'' postulate for SPs. Section \ref{discuss} concludes the paper by discussing our results and conjecturing analogous results for more general classes of features-sampling models.


\section{Species-sampling models}  \label{sec:species}

To introduce species-sampling models, we  assume that the observations are $\Z$-valued random elements, and $\Z$ is supposed to be a Polish space whose Borel $\sigma$-algebra is denoted by $\Zcr$. Thus $\Z$ contains all the possible species' labels of the populations. When we deal with species-sampling models, the hierarchical formulation \eqref{eq:exchangeability} specializes as
\begin{equation}\label{eq:SSM}
\begin{split}
Z_j | \tilde{p}  & \simiid   \tilde{p}  \quad j \geq 1  \\
\tilde{p}  & \sim \Mcr
\end{split}  
\end{equation}
where $\tilde{p}= \sum_{i \geq 1} \tilde{p}_i \delta_{\tilde{z}_i}$ is an almost surely discrete random probability measure on $\Z$, and $\Mcr$ denotes its law. We also remind that: i) $(\tilde{p}_i)_{i \geq 1}$ are non-negative random weights summing up to $1$ almost surely; ii) $(\tilde{z}_i)_{i \geq 1}$ are random species' labels, independent of $(\tilde{p}_i)_{i \geq 1}$, and  i.i.d. as a common (non-atomic) distribution $P$. Using the terminology of \citet{Pitman96species}, the discrete random probability measure $\tilde{p}$ is a \textit{species-sampling model}. In Bayesian nonparametrics, popular examples of species-sampling models are: the Dirichlet process (\citet{ferguson1973bayesian}), the Pitman-Yor process (\citet{Perman92,pityor_97}), the normalized generalized Gamma process (\citet{Brix1999,Lijoi2007}). These are examples belonging to a peculiar subclass of species-sampling models, which are referred to as Gibbs-type prior models (\citet{gnepit_06,gibbs_15}). More general subclasses of species-sampling models are, e.g. the homogeneous normalized random measures  (\citet{regazzini2003}) and the Poisson-Kingman models (\citet{pitman2003poisson,Pitman_06}). We refer to \citet{lijoi2010models} and \citet{fundamentals} for a detailed and stimulating account on species-sampling models and their use in Bayesian nonparametrics.

Because of the almost sure discreteness of $\tilde{p}$ in \eqref{eq:SSM}, a random sample $\bm{Z}_n := (Z_1, \ldots , Z_n)$ from $\tilde{p}$ features ties, that is $\P (Z_{j_1} = Z_{j_2})>0$ if $j_1 \not = j_2$. Thus, $\bm{Z}_n$ induces a random partition of the set $\{1,\ldots,n\}$ into $K_n =k \leq n$ blocks, labelled by $Z_1^*, \ldots , Z_{K_n}^*$, with corresponding frequencies $(N_{n,1}, \ldots , N_{n,K_n})= (n_1, \ldots , n_k)$, such that $N_{i,n}\geq1$ and $\sum_{1\leq i\leq K_{n}}N_{i,n}=n$. From \citet{Pitman96species}, the predictive distribution of $\tilde{p}$ is of the form
\begin{equation}
\label{eq:SS_predictive}
\P (Z_{n+1} \in A | \bm{Z}_n) = g(n,k,\bm{n})P(A) + \sum_{i=1}^k  f_i (n,k, \bm{n}) \delta_{Z_i^*} (A) , \quad A \in \Zcr,
\end{equation}
for any $n\geq1$, having set $\bm{n}= (n_1, \ldots , n_k)$, with $g$ and $f_{i}$ being arbitrary non-negative functions that satisfy the constraint $g(n,k,\bm{n}) + \sum_{i=1}^k  f_i (n,k, \bm{n})=1$. The predictive distribution \eqref{eq:SS_predictive} admits the following interpretation: i) $g(n,k,\bm{n})$ corresponds to the probability that $Z_{n+1}$ is a new species, that is a species non observed in $\bm{Z}_n$; ii) $f_i (n,k, \bm{n})$ corresponds to the probability that $Z_{n+1}$ is a species $Z_i^*$ in $\bm{Z}_n$. The functions $g$ and $f_{i}$ completely determine the distribution of the exchangeable sequence $(Z_{j})_{j\geq1}$, and in turns the distribution of the random partition of $\mathbb{N}$ induced by $(Z_{j})_{j\geq1}$. Predictive distributions of popular species-sampling models, e.g. the Dirichlet process, the Pitman-Yor process and the normalized generalized Gamma process, are of the form  \eqref{eq:SS_predictive} for suitable specification of the functions $g$ and $f_{i}$. We refer to \citet{Pitman_06} for a detailed account on random partitions induced by species-sampling models, and generalizations thereof.

Here, we recall the predictive distribution of Gibbs-type prior models (\citet{gnepit_06,gibbs_15}). Let us first introduce the definition of these processes.
\begin{Definition}
Let $\sigma\in(-\infty,1)$ and let $P$ be a (non-atomic) distribution on $(\Z, \Zcr)$. A Gibbs-type prior models is a species-sampling models with predictive distribution of form
\begin{equation}
\begin{split}
\P (Z_{n+1} \in A | \bm{Z}_n) & = \frac{V_{n+1,k+1}}{V_{n,k}} P(A) +  \frac{V_{n+1,k}}{V_{n,k}}  \sum_{i=1}^k  (n_i-\sigma)
\delta_{Z_i^*} (A), \quad A \in \Zcr,
\end{split}
\end{equation}
for any $n \geq 1$, where $\{ V_{n,k} : \; n \geq 1, \, 1 \leq k \leq n \}$ is a collection of non-negative weights that satisfy the recurrence  relation $V_{n,k} = (n -\sigma k) V_{n+1,k} +V_{n+1, k+1}$ for all $k =1, \ldots , n$, $n \geq 1$, with the proviso $V_{1,1}=1$.
\end{Definition}
Note that the Dirichlet process is a Gibbs-type prior model which corresponds to
\[
V_{n,k} = \frac{\theta^k}{(\theta)_n}
\]
for $\theta>0$, and having denoted by $(a)_b= \Gamma (a+b)/\Gamma (a)$ the Pochhammer symbol for the rising factorials. Moreover, the Pitman-Yor process is a Gibbs-type prior model corresponding to
\[
V_{n,k} = \frac{\prod_{i=0}^{k-1} (\theta+i \sigma)}{(\theta)_n}
\]
for $\sigma \in (0,1)$ and $\theta > -\alpha$. We refer to \citet{pitman2003poisson} for other examples of Gibbs-type prior models, and for a detailed account on the $V_{n,k}$s. See also \citet{Pitman_06} and references therein.

Because of de Finetti's representation theorem, there exists a one-to-one correspondence between the functions $g$ and $f_i$ in the predictive distribution \eqref{eq:SS_predictive} and the law $\Mcr$ of $\tilde{p}$, i.e. the de Finetti measure. This is at the basis of Johnson's ``sufficientness'' postulates, characterizing species-sampling models through their predictive distributions. \citet{Regazzini1978}, and later \citet{Lo1991}, provided the first ``sufficientness'' postulate for species-sampling models, showing that: the Dirichlet process is the unique species-sampling model for which the function $g$ depends on $\bm{Z}_n$ only through $n$, and the function $f_i$ depends on $\bm{Z}_n$ only through $n$ and $n_{i}$, for $i\geq1$. Such a result has been extended in \citet{Zabell1997}, providing the following ``sufficientness'' postulate for the Pitman-Yor process: the Pitman-Yor process is the unique species-sampling model for which the function $g$ depends on $\bm{Z}_n$ only through $n$ and $k$, and the function $f_{i}$ depends on $\bm{Z}_n$ only through $n$ and $n_{i}$, for $i\geq1$. \citet{marcoB} discussed ``sufficientness'' postulate in the more general setting of Gibbs-type prior models, showing that: Gibbs-type prior models are the sole species-sampling models for which the function $g$ depends on $\bm{Z}_n$ only through $n$ and $k$, and the function $f_{i}$ depends on $\bm{Z}_n$ only through $n$, $k$ and $n_{i}$. Such a result shows a critical difference, at the sampling level, between the Pitman-Yor process and Gibbs-type prior models, which lies in the inclusion of the sampling information on the observed number of distinct species in the probability of observing at the $(n+1)$-th draw a species already observed in the sample.


\section{Features-sampling models} \label{sec:BeP}

Features-sampling models generalize species-sampling models by allowing each individual to belong to more than one species, which are now called features. To introduce features-sampling models, we consider a space of features $\W$, which is assumed to be a Polish space, and we denote by $\Wcr$ its Borel $\sigma$-field. Thus $\W$ contains all the possible features' labels of the population. Observations are represented through the counting measure \eqref{eq:counting_measure}, whose parameter $\tilde{\mu}$ is an almost surely discrete measure with masses in $(0,1)$. When we deal with features-sampling models, the hierarchical formulation \eqref{eq:exchangeability} specializes as
\begin{equation}
\label{eq:feature_ex}
\begin{split}
Z_j | \tilde{\mu}  & \simiid {\rm BeP} (\tilde{\mu}) \\
\tilde{\mu}   &  \sim  \Mcr
\end{split}
\end{equation}
where $\tilde{\mu} = \sum_{i \geq 1} \tilde{p}_i \delta_{\tilde{w}_i}$ is an almost surely discrete random measure on $\W$, and $\Mcr$ denotes its law. We also remind that: i) conditionally on $\tilde{\mu}$, $(A_{j,i})_{i \geq 1}$ are independent Bernoulli random variables with parameters $(\tilde{p}_i)_{i\geq 1}$; ii) $(\tilde{p}_i)_{i \geq 1}$ are $(0,1)$-valued random weights; iii) $(\tilde{w}_i)_{i \geq 1}$ are random features' labels, independent of $(\tilde{p}_i)_{i \geq 1}$, and i.i.d. with common (non-atomic) distribution $P$. Completely random measures (CRMs) (\citet{daleyII,kingman1967completely}) provide a popular class of nonparametric priors $\Mcr$, the most common examples being the Beta process prior and the stable Beta process prior (\citet{teh2009indian,james2017bayesian}). See also \citet{broderick2018posteriors} and references therein for other examples of CRM priors, and generalizations thereof. Recently \citet{camerlenghi2021scaled} investigated an alternative class of nonparametric priors $\Mcr$, generalizing CRM priors and referred to as Scaled Processes (SPs). SPs priors first appeared in the work of \citet{james2017bayesian}.

We assume a random sample $\bm{Z}_n :=(Z_1, \ldots , Z_n)$ to be modeled as in \eqref{eq:feature_ex}, and we introduce the predictive distribution of $\tilde{\mu}$, that is the conditional probability of $Z_{n+1}$ given $\bm{Z}_n$. Note that, because of the pure discreteness of $\tilde{\mu}$, the observations $\bm{Z}_n$ may share a random number of distinct features, say $K_n=k$, denoted here as $W_1^*, \ldots , W_{K_n}^*$, and each feature $W_i^*$ is displayed exactly by $M_{n,i}=m_i$ of the $n$ individuals, as $i = 1, \ldots , k$.  Since the features' labels are immaterial and i.i.d. form  the base measure $P$, the conditional distribution  of  $Z_{n+1}$, given $\bm{Z}_n$, may be equivalently characterized through the vector $(Y_{n+1}, A_{n+1,1}^*, \ldots , A_{n+1,K_n}^*)$, where: i) $Y_{n+1}$ is the number of new features displayed by the $(n+1)$th individual, namely hitherto unobserved out of the sample $\bm{Z}_n$; ii) $A_{n+1,i}^*$ is a $\{0,1\}$-valued random variable for any $i =1, \ldots , K_n$, and $A_{n+1,i}^*=1$ if the $(n+1)$th individual displays feature $W_i^*$, it equals $0$ otherwise. Hence, the predictive distribution of $\tilde{\mu}$ is
\begin{equation}
\label{eq:predictive_general}
\begin{split}
&\P  ((Y_{n+1}, A_{n+1,1}^*, \ldots , A_{n+1,K_n}^*) = (y, a_1, \ldots , a_{K_n}) | \bm{Z}_n)  =
f(y, a_1, \ldots , a_k ; n, k, \bm{m})
\end{split}
\end{equation}
where we denote by $f$ a probability distribution evaluated at $(y, a_1, \ldots, a_k)$, and where $n,k$ and $\bm{m}:=( m_1, \ldots , m_k)$ is the sampling information. In the rest of this section we specify the function $f$ under the assumption of a CRM prior and a SP prior, showing its dependence on $n, K_n$ and $(M_{n,1}, \ldots , M_{n, K_n})$. In particular, we show how SP priors allow to enrich the predictive distribution of CRM priors, by including additional sampling information in terms of the number of distinct features and their corresponding frequencies.

\subsection{Priors based on CRMs}

Let $\mathsf{M}_\mathds{W}$ denote the space of all bounded and finite measures on $(\W, \Wcr)$, that is to say  $\mu \in \mathsf{M}_\W$ iff $\mu (A)< +\infty$ for any bounded set $A\in \Wcr$. Here we recall the definition of a Completely Random Measure (CRM) (see, e.g., \citet{daleyII}).
\begin{Definition}
A Completely Random Measure (CRM) $\tilde{\mu}$ on $(\W, \Wcr)$ is a random element taking values in the space $\mathsf{M}_\W$  such that the random variables $\tilde{\mu} (A_1), \ldots , \tilde{\mu} (A_n)$ are independent for any choice of bounded and disjoint sets $A_1, \ldots , A_n \in \Wcr$ and for any $n \geq 1$. 
\end{Definition}
We remind that \citet{kingman1967completely} proved that a CRM may be decomposed as the sum of  a deterministic drift and  a purely atomic component. In Bayesian nonparametrics, it is common to consider purely atomic CRMs without fixed points of discontinuity, that is to say $\tilde{\mu}$ may be represented as $\tilde{\mu} := \sum_{i \geq 1} \tilde{\eta}_i \delta_{\tilde{w}_i}$, where
$(\tilde{\eta}_i)_{i \geq 1}$ is a sequence of random atoms and $(\tilde{w}_i)_{i \geq 1}$ are the random locations. An appealing property of purely atomic CRMs is the availability of their Laplace functional, indeed for any measurable function $f : \W \to \R^+$ one has
\begin{equation}
\label{eq:laplace}
\E \left[ e^{-\int_\W  f(w) \tilde{\mu} (\D w)} \right] =
\exp\left\{ - \int_{\W \times\R^+} (1-e^{-s f(w)}) \nu ( \D w , \D s)\right\} 
\end{equation}
where $\nu$ is a measure on $\W  \times \R^+$ called the L\'evy intensity of the CRM $\tilde{\mu}$ and it is such that
\begin{equation}
\label{eq:integrability}
\nu (\{w\} \times \R^+) = 0 \quad \forall w \in \W , \quad \text{and } \int_{ A  \times \R^+} \min \{s,1  \} \nu (\D w, \D s) < \infty
\end{equation}
for any bounded Borel set $A$. Here, we focus on homogeneous CRMs by assuming that the atoms $\tilde{\eta}_i$s and the locations $\tilde{w}_i$s are independent, in this case the L\'evy measure may be written as
\[
\nu (\D w , \D s ) = \lambda (s) \D s P (\D w)
\]
for some measurable function $\lambda: \R^+ \to \R^+$ and a probability measure $P$ on $(\W , \Wcr)$, called the \textit{base measure}, which is assumed to be diffuse. In this case the distribution of $\tilde{\mu}$ will be denoted as ${\rm CRM} (\lambda; P)$, and the second integrability condition in \eqref{eq:integrability} reduces to the following
\begin{equation}
\label{eq:integrability_homogeneous}
\int_{\R^+} \min \{ s, 1\}    \lambda (s) \D s < +\infty .
\end{equation}

In the feature-sampling framework, $\tilde{\mu}$ may be used as a prior distribution if the sequence of atoms $(\tilde{\eta}_i)_{i \geq 1}$ are in between $[0,1]$, which happens if the L\'evy intensity has support on $\W  \times [0,1]$. A noteworthy example, widely used in this setting, is the stable Beta process prior (\citet{teh2009indian}). It is defined as a CRM with L\'evy intensity
\begin{equation}
\label{eq:beta_levy}
\lambda (s) =  \alpha  \cdot \frac{\Gamma (1+c)}{\Gamma (1-\sigma) \Gamma (c+\sigma)} s^{-1-\sigma}  (1-s)^{c+\sigma-1} 
 \mathds{1}_{(0,1)} (s)
\end{equation}
where $c >0$, $\sigma \in (0,1)$ and $\alpha >0$ (\citet{james2017bayesian,masoero2019more}). Now, we describe the predictive distribution an arbitrary CRM $\tilde{\mu}$. For the sake of clarity, we fix the following notation 
\[
{\rm Poiss} (y; C) := \frac{C^y e^{-C}}{y! } , y \in \N \text{ and }  {\rm Bern} (a; p) := p^a (1-p)^{1-a},  a \in \{0,1\} 
\]
to denote the probability mass functions of a Poisson with parameter $C >0$ and a Bernoulli random variable with parameter $p \in [0,1]$, respectively. We refer to \citet{james2017bayesian} for a detailed posterior analysis of CRM priors. See also \citet{broderick2018posteriors} and references therein.

\begin{Theorem}[\citet{james2017bayesian}]
\label{thm:predictive_CRM}
Let  $Z_1, Z_2, \ldots $ be exchangeable random variables modeled as in \eqref{eq:feature_ex}, where $\Mcr$ equals ${\rm CRM} (\lambda; P)$. If $\bm{Z}_n$ is a random sample which displays $K_{n}=k$ distinct features $\{W_{1}^{*},\ldots,W^{*}_{K_{n}}\}$, and feature $W_i^*$ appears exactly $M_{n,i}=m_i$ times in the samples, as $i =1, \ldots , K_n$, then
\begin{equation}
\label{eq:pred_CRM}
\begin{split}
&\P ((Y_{n+1}, A_{n+1,1}^*, \ldots , A_{n+1,K_n}^*)= (y, a_1, \ldots , a_{K_n}) | \bm{Z}_n) \\
& \qquad \qquad =  {\rm Poiss} \left( y; \int_0^1  s (1-s)^n  \lambda (s ) \D s \right) \prod_{i=1}^k {\rm Bern} (a_i; p_i^*)
\end{split}
\end{equation}
being
\[
p_i^* : = \frac{\int_0^1 s^{m_i+1} (1-s)^{n-m_i}  \lambda (s) \D s }{\int_0^1 s^{m_i} (1-s)^{n-m_i}  \lambda (s) \D s}.
\]
\end{Theorem}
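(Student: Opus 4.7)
The plan is to derive the predictive law from the posterior characterization of the CRM $\tilde{\mu}$ given $\bm{Z}_n$, which is due to \citet{james2017bayesian}. The key structural fact is that the posterior distribution of $\tilde{\mu}$ given $\bm{Z}_n$ decomposes as the sum of two independent pieces: (i) a ``updated'' CRM $\tilde{\mu}^{*}$ with L\'evy intensity $(1-s)^{n}\lambda(s)\,\D s\, P(\D w)$, capturing the mass available to produce hitherto unseen features, and (ii) $k$ independent fixed jumps at the observed locations $W_{1}^{*},\ldots,W_{k}^{*}$ with random heights $J_{i}$ whose density is proportional to $s^{m_{i}}(1-s)^{n-m_{i}}\lambda(s)$ on $(0,1)$. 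This is obtained by a disintegration of the joint law of $(\tilde{\mu},\bm{Z}_{n})$ using the Laplace functional \eqref{eq:laplace} and the Bernoulli process likelihood.

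Conditional on this posterior decomposition, the vector $(Y_{n+1},A_{n+1,1}^{*},\ldots,A_{n+1,K_{n}}^{*})$ splits into two independent components, and I would analyse them separately. For the observed features, note that $Z_{n+1}$ charges $W_{i}^{*}$ with probability $J_{i}$ conditionally on $J_{i}$; hence the marginal conditional probability $\P(A_{n+1,i}^{*}=1\mid \bm{Z}_{n})$ equals
\[
\E[J_{i}\mid \bm{Z}_{n}]=\frac{\int_{0}^{1}s^{m_{i}+1}(1-s)^{n-m_{i}}\lambda(s)\,\D s}{\int_{0}^{1}s^{m_{i}}(1-s)^{n-m_{i}}\lambda(s)\,\D s}=p_{i}^{*},
\]
and independence across $i$ follows from independence of the fixed jumps in the posterior.

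For the number of new features $Y_{n+1}$, I would argue via the Poisson random measure representation of $\tilde{\mu}^{*}$: write $\tilde{\mu}^{*}=\sum_{j\geq 1}\tilde{\xi}_{j}\delta_{\tilde{v}_{j}}$, where the points $(\tilde{v}_{j},\tilde{\xi}_{j})$ form a PRM on $\W\times(0,1)$ with intensity $(1-s)^{n}\lambda(s)\,\D s\, P(\D w)$. Given $\tilde{\mu}^{*}$, the new atom $\tilde{v}_{j}$ is ``selected'' by $Z_{n+1}$ independently with probability $\tilde{\xi}_{j}$, so conditional on $\tilde{\mu}^{*}$ the number of new features is $\mathrm{Poisson}\bigl(\sum_{j}\tilde{\xi}_{j}\bigr)$ only after a further thinning step; more directly, marking the PRM by independent Bernoullis with success probability $s$ produces a PRM of ``selected'' atoms on $\W\times(0,1)$ with intensity $s(1-s)^{n}\lambda(s)\,\D s\, P(\D w)$, whose total mass is Poisson with parameter $\int_{0}^{1}s(1-s)^{n}\lambda(s)\,\D s$ as claimed. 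Independence of $Y_{n+1}$ from $(A_{n+1,i}^{*})_{i=1}^{k}$ is inherited from the independence of $\tilde{\mu}^{*}$ and the fixed jumps in the posterior.

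The main obstacle is the rigorous posterior decomposition of $\tilde{\mu}\mid\bm{Z}_{n}$ into the updated CRM $\tilde{\mu}^{*}$ plus the independent fixed jumps, since it requires a careful disintegration argument that handles the infinite-dimensional nature of $\tilde{\mu}$ and the integrability condition \eqref{eq:integrability_homogeneous}; once this is in place, the two calculations above are essentially immediate consequences of Campbell's theorem and the standard thinning properties of Poisson random measures. I would therefore invoke the posterior characterization from \citet{james2017bayesian} as a black box and devote the proof to the two conditional computations and the verification that they combine into the product form \eqref{eq:pred_CRM}.
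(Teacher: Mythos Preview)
Your proposal is correct and follows the same structural outline as the paper: both invoke the posterior decomposition of \citet{james2017bayesian} (updated CRM with L\'evy density $(1-s)^{n}\lambda(s)$ plus independent fixed jumps $J_{i}$) as a black box, and both compute $p_{i}^{*}=\E[J_{i}]$ identically. The only difference is in the derivation of the Poisson law for $Y_{n+1}$: the paper computes the Laplace transform $\E[e^{-tY_{n+1}}\mid\bm{Z}_{n}]$ by conditioning on $\tilde{\mu}'$ and applying the L\'evy--Khintchine representation \eqref{eq:laplace}, then recognizes the result as the Laplace transform of a Poisson; you instead use the marking/thinning property of Poisson random measures to identify the selected atoms directly as a PRM with intensity $s(1-s)^{n}\lambda(s)\,\D s\,P(\D w)$. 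Both routes are standard and equally rigorous; your thinning argument is arguably more transparent since it bypasses the transform computation, while the paper's Laplace-functional approach has the advantage of reusing a tool already set up in \eqref{eq:laplace}. One small wording issue: what is Poisson is the \emph{number of points} of the thinned PRM, not its ``total mass''; and your parenthetical remark that ``conditional on $\tilde{\mu}^{*}$ the number of new features is $\mathrm{Poisson}(\sum_{j}\tilde{\xi}_{j})$'' is not literally true (it is a sum of independent Bernoullis, not Poisson), though you immediately correct course with the marking argument.
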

\begin{proof}
We consider \citet[Proposition 3.2]{james2017bayesian} for Bernoulli product models (see also \citet[Proposition 1]{camerlenghi2021scaled}), thus the distribution of $Z_{n+1}$, given $\bm{Z}_n$, equals the distribution of
\begin{equation}
\label{eq:posterior_representation_CRM}
Z_{n+1}'+  \sum_{i=1}^{K_n} A_{n+1,i}^*  \delta_{W_i^*},
\end{equation}
where $Z_{n+1}' | \tilde{\mu}' = \sum_{i \geq 1} A_{n+1,i}' \delta_{\tilde{w}_i'} \sim {\rm BeP} (\tilde{\mu}')$ such that $\tilde{\mu}' \sim {\rm CRM} ((1-s)^n \lambda; P)$, and  $A_{n+1,1}^*, \ldots , A_{n+1,K_n}^*$ are Bernoulli random variables with parameters $J_1, \ldots , J_{K_n}$, respectively, such that each $J_i$ is a random variable whose  distribution with density function of the form
\[
f_{J_i} (s) \propto  (1-s)^{n-m_i} s^{m_i} \lambda (s).
\]
By exploiting the previous predictive characterization, we can derive the posterior distribution of $Y_{n+1}$, given $\bm{Z}_n$, by means of a direct application of the Laplace functional. Indeed,  the distribution of $Y_{n+1}|  \bm{Z}_n$ equals $\sum_{i \geq 1} A_{n+1,i}'$. Thus, for any $t \in \R$, we have the following
\begin{align*}
\E [e^{-t Y_{n+1}}| \bm{Z}_n] & =  \E  [e^{-t  \sum_{i \geq 1} A_{n+1, i}'}]  =   \E  \Big[ \prod_{i \geq 1} e^{-t A_{n+1,i}'} \Big]
= \E  \Big[ \E   \Big[ \prod_{i \geq 1} e^{-t A_{n+1,i}'} \mid  \tilde{\mu}' \Big]  \Big]\\
& =  \E  \Big[  \prod_{i \geq 1} \Big( e^{-t} \tilde{\eta}_i'  +(1- \tilde{\eta}_i') \Big)  \Big],
\end{align*}
where we used the representation $\tilde{\mu}' = \sum_{i \geq 1} \tilde{\eta}_i' \delta_{\tilde{w}_i'}$ and the fact that the  $A_{n+1,i}'$s are independent Bernoulli random variables conditionally on $\tilde{\mu}'$. We now use the Laplace functional for $\tilde{\mu}'$ to get 
\begin{align*}
\E [e^{-t Y_{n+1}}| \bm{Z}_n] & =  \E  \left[ \exp \left\{ \sum_{i \geq 1} \log (1+\tilde{\eta}_i'  (e^{-t}-1)) \right\} \right] \\
& =  \exp \left\{ -(1-e^{-t}) \int_0^1   (1-s)^n s \lambda (s)  \D s  \right\}.
\end{align*}
As a direct consequence, the posterior distribution of $Y_{n+1}$ given  $\bm{Z}_n$ is a Poisson distribution with mean $\int_0^1   (1-s)^n s \lambda (s)  \D s $. Again, by exploiting the predictive representation \eqref{eq:posterior_representation_CRM}, the posterior distribution of   $A_{n+1,i}^*$, as $i = 1, \ldots , K_n$, is a Bernoulli with the following mean
\[
\E [J_i] =  \int_0^1  s f_{J_i} (s) \D s =  \frac{\int_0^1   (1-s)^{n-m_i} s^{m_i+1} \lambda (s)  \D s   }{\int_0^1  
(1-s)^{n-m_i} s^{m_i} \lambda (s)  \D s } .
\]
\end{proof}

\begin{Corollary}
\label{cor:pred_beta}
Let $Z_1, Z_2, \ldots $ be exchangeable random variables modeled as in \eqref{eq:feature_ex}, where $\Mcr$ is the law of the stable Beta process. If $\bm{Z}_n$ is a random sample which displays $K_{n}=k$ distinct features $\{W_{1}^{*},\ldots,W^{*}_{K_{n}}\}$, and feature $W_i^*$ appears exactly $M_{n,i}=m_i$ times in the samples, as $i =1, \ldots , K_n$, then
\begin{equation}
\label{eq:pred_beta}
\begin{split}
&\P ((Y_{n+1}, A_{n+1,1}^*, \ldots , A_{n+1,K_n}^*)= (y, a_1, \ldots , a_{K_n}) | \bm{Z}_n) \\
& \qquad \qquad =  {\rm Poiss} \left( y; \alpha \frac{(c+\sigma)_n}{(c+1)_n}\right) \prod_{i=1}^k {\rm Bern}  \left( a_i; 
\frac{m_i -\sigma}{n+c}\right),
\end{split}
\end{equation}
where $(x)_y= \Gamma (x+y)/\Gamma (x)$ denotes the Pochhammer symbol for $x, y >0$.
\end{Corollary}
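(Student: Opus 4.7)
The plan is to specialize Theorem \ref{thm:predictive_CRM} to the stable Beta Lévy intensity \eqref{eq:beta_levy}; no new probabilistic ideas are needed, so the proof reduces to three Beta-integral computations. First I would substitute the expression for $\lambda$ into the Poisson rate appearing in \eqref{eq:pred_CRM}, grouping the powers of $s$ and $1-s$ to produce a single Beta integral:
\begin{equation*}
\int_0^1 s(1-s)^n \lambda(s)\,\D s = \alpha \, \frac{\Gamma(1+c)}{\Gamma(1-\sigma)\Gamma(c+\sigma)} \int_0^1 s^{-\sigma}(1-s)^{n+c+\sigma-1}\,\D s.
\end{equation*}
The remaining integral equals $B(1-\sigma, n+c+\sigma) = \Gamma(1-\sigma)\Gamma(n+c+\sigma)/\Gamma(n+1+c)$, and after cancelling $\Gamma(1-\sigma)$ and regrouping $\Gamma(n+c+\sigma)/\Gamma(c+\sigma) = (c+\sigma)_n$ and $\Gamma(n+1+c)/\Gamma(1+c) = (c+1)_n$, one obtains exactly $\alpha (c+\sigma)_n/(c+1)_n$, which is the announced Poisson mean.

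Next, for the Bernoulli parameters $p_i^*$, I would compute the numerator and the denominator in the definition of $p_i^*$ separately, each time absorbing the $s$ powers from the Lévy density:
\begin{align*}
\int_0^1 s^{m_i+1}(1-s)^{n-m_i} \lambda(s)\,\D s &\propto \int_0^1 s^{m_i-\sigma}(1-s)^{n-m_i+c+\sigma-1}\,\D s, \\
\int_0^1 s^{m_i}(1-s)^{n-m_i}\lambda(s)\,\D s &\propto \int_0^1 s^{m_i-\sigma-1}(1-s)^{n-m_i+c+\sigma-1}\,\D s,
\end{align*}
with the same proportionality constant $\alpha\Gamma(1+c)/[\Gamma(1-\sigma)\Gamma(c+\sigma)]$ in both lines, which therefore cancels. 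Each integral is a Beta function; taking the ratio, the factor $\Gamma(n-m_i+c+\sigma)$ drops out and one is left with
\[
p_i^* = \frac{\Gamma(m_i+1-\sigma)\,\Gamma(n+c)}{\Gamma(m_i-\sigma)\,\Gamma(n+c+1)} = \frac{m_i-\sigma}{n+c},
\]
which matches the claim. (For this step to make sense one needs $m_i \geq 1$ so that $\Gamma(m_i - \sigma)$ is finite for $\sigma \in (0,1)$; this is guaranteed since an observed feature appears at least once.)

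There is no real obstacle here: the only thing to watch is bookkeeping of the exponents and Gamma arguments so that the constants $\alpha$, $\Gamma(1+c)$, $\Gamma(1-\sigma)$, $\Gamma(c+\sigma)$ cancel correctly, and that the rising-factorial rewriting $(x)_n = \Gamma(x+n)/\Gamma(x)$ is applied consistently. Once these two integrals are evaluated, plugging them into \eqref{eq:pred_CRM} yields \eqref{eq:pred_beta} directly.
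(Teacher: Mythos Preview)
Your proposal is correct and follows exactly the same route as the paper's proof: specialize Theorem~\ref{thm:predictive_CRM} to the stable Beta L\'evy intensity~\eqref{eq:beta_levy}, reduce both the Poisson mean and each Bernoulli parameter to Beta integrals, and simplify via the Pochhammer identity $(x)_n=\Gamma(x+n)/\Gamma(x)$. Your bookkeeping of exponents is in fact slightly more careful than the paper's (the exponent on $1-s$ in the Poisson-rate integral is indeed $n+c+\sigma-1$), and your remark that $m_i\geq 1$ ensures finiteness of $\Gamma(m_i-\sigma)$ is a nice sanity check the paper omits.
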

\begin{proof}
It is sufficient to specialize Theorem \ref{thm:predictive_CRM} for the stable Beta process. In particular, from Theorem \ref{thm:predictive_CRM} the posterior distribution of $Y_{n+1}$ given $\bm{Z}_n$ is a Poisson distribution with mean
\begin{align*}
 \int_0^1  s (1-s)^n  \lambda (s ) \D s \stackrel{\eqref{eq:beta_levy}}{=}  \frac{\alpha \Gamma (1+c)}{\Gamma (1-\sigma) \Gamma (c+\sigma)} \int_{0}^1  s^{-\sigma}(1-s)^{n+c+\sigma} \D s  = \alpha  \frac{(c+\sigma)_n}{(c+1)_n} . 
\end{align*}
Moreover the parameters of the Bernoulli random variables $A_{n+1,1}^*, \ldots , A_{n+1,K_n}^*$ are equal to
\begin{align*}
p_i^*  = \frac{\int_0^1 s^{m_i+1} (1-s)^{n-m_i}  \lambda (s) \D s }{\int_0^1 s^{m_i} (1-s)^{n-m_i}  \lambda (s) \D s}
\stackrel{\eqref{eq:beta_levy}}{=} \frac{B (m_i+1-\sigma,  c+\sigma+n -m_i)}{B (m_i -\sigma, c+\sigma+n-m_i)}
= \frac{m_i -\sigma}{n+c}
\end{align*}
as $i=1, \ldots , K_n$.
\end{proof}

\subsection{SP priors}

From Theorem \ref{thm:predictive_CRM}, under CRM priors the distribution of the number of new features $Y_{n+1}$ is a Poisson distribution which depends on the sampling information only through the sample size $n$. Moreover, the probability of observing a feature already observed in the sample, say $W_i^*$, depends only on the sample size $n$ and the frequency $m_i$ of feature $W_i^*$ out of the initial sample. \citet{camerlenghi2021scaled} showed that SP priors allow to enrich the predictive structure of CRM priors, including additional sampling information in the probability of discovering new features. To introduce SP priors, consider a CRM $\tilde{\mu}=  \sum_{i \geq 1}
\tilde{\tau}_i \delta_{\tilde{w}_i}$ on $\W$, where $(\tilde{\tau}_i)_{i \geq 1}$ are positive random atoms and $(\tilde{w}_i)_{i \geq 1}$
are i.i.d. random atoms, with L\'evy intensity $ \nu ( \D w, \D s  ) = \lambda (s) \D s P (\D w)$ satisfying
\begin{equation}
\label{eq:levy_cond}
\int_0^\infty  \min \{ s,1 \} \lambda (s) \D s < +\infty .
\end{equation}
Consider the ordered jumps $\Delta_1 > \Delta_2 > \cdots$ of the CRM $\tilde{\mu}$ and define the random measure
\[
\tilde{\mu}_{\Delta_1} =  \sum_{i \geq 1} \frac{\Delta_{i+1}}{\Delta_1} \delta_{\tilde{w}_i}
\]
normalizing $\tilde{\mu}$ by the largest jump. The definition of SPs follows by a suitable change of measure of $\Delta_1$ (\citet{james2015scaled,camerlenghi2021scaled}). Let us denote by $ \Lcr (\, \cdot \, , a  )$ a regular version of the conditional probability distribution of $(\Delta_{i+1}/\Delta_1)_{i \geq 1}$, given $\Delta_1= a$. Now denote by $\Psi_1$  a positive random variable with density function $f_{\Psi_1}$ on $\R^+$, and define
\[
\Lcr (\, \cdot \,) := \int_{\R^+}  \Lcr (\, \cdot \, ,  a  ) f_{\Psi_1} (a) \D a 
\]
the distribution of $(\Delta_{i+1}/\Delta_1)_{i \geq 1}$ obtained by mixing $ \Lcr (\, \cdot \, , a  )$ with respect to the density function $f_{\Psi_1}$. Thus, we are ready to define a SP.
\begin{Definition}
A Scaled Process (SP) prior  on $(\W, \Wcr)$ is defined as the almost surely discrete random measure
\begin{equation}
\label{eq:SP_def}
\tilde{\mu}_{\Psi_1} := \sum_{i \geq 1} \tilde{\eta}_i \delta_{\tilde{w}_i},
\end{equation}
where $(\tilde{\eta}_i)_{i \geq 1}$ has distribution $\Lcr$ and $(\tilde{w}_i)_{i \geq 1}$ is a sequence of independent random variables with common distribution $P$, also independent of $(\tilde{\eta}_i)_{i \geq 1}$. We will write
$\tilde{\mu}_{\Psi_1} \sim {\rm SP} (\nu,f_{\Psi_1} )$.
\end{Definition}
 A thoughtful account with a complete posterior analysis for SPs is given in \citet{camerlenghi2021scaled}. Here we characterize the predictive distribution \eqref{eq:predictive_general} of SPs.
\begin{Theorem} 
\label{thm:predictive_SP}[\citet{james2017bayesian,camerlenghi2021scaled}]
Let $Z_1, Z_2, \ldots $ be exchangeable random variables modeled as in \eqref{eq:feature_ex}, where $\Mcr$ equals ${\rm SP} (\nu, f_{\Psi_1})$. If $\bm{Z}_n$ is a random sample which displays $K_{n}=k$ distinct features $\{W_{1}^{*},\ldots,W^{*}_{K_{n}}\}$, and feature $W_i^*$ appears exactly $M_{n,i}=m_i$ times in the samples, as $i =1, \ldots , K_n$, then the
conditional distribution of $\Psi_1$, given $\bm{Z}_n$, has posterior density:
\begin{equation}
\label{eq:posterior_jump}
f_{\Psi_1| \bm{Z}_n} (a) \propto e^{-\sum_{i=1}^n \int_0^1  s (1-s)^{n-1} a \lambda (as) \D s } 
\prod_{i=1}^k \int_0^1  s^{m_i} (1-s)^{n-m_i}  a \lambda (as) \D s f_{\Psi_1} (a).
\end{equation}
Moreover, conditionally on $\bm{Z}_n$ and $\Psi_1$,
\begin{equation}
\label{eq:pred_SP}
\begin{split}
&\P ((Y_{n+1}, A_{n+1,1}^*, \ldots , A_{n+1,K_n}^*)= (y, a_1, \ldots , a_{K_n}) | \bm{Z}_n, \Psi_1) \\
& \qquad \qquad =  {\rm Poiss} \left( y; \int_0^1  s \Psi_1 (1-s)^n  \lambda (s \Psi_1 ) \D s \right) \prod_{i=1}^k {\rm Bern} (a_i; p_i^*(\Psi_1))
\end{split}
\end{equation}
being
\[
p_i^*(\Psi_1) : = \frac{\int_0^1 s^{m_i+1} (1-s)^{n-m_i}  \lambda (s \Psi_1) \D s }{\int_0^1 s^{m_i} (1-s)^{n-m_i}  \lambda (s \Psi_1) \D s}.
\]
\end{Theorem}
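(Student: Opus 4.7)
The strategy is to exploit the hierarchical definition of an SP: conditionally on $\Psi_1 = a$, the random measure $\tilde{\mu}_{\Psi_1}$ reduces to a CRM with an explicit L\'evy intensity, and both \eqref{eq:pred_SP} and \eqref{eq:posterior_jump} then follow from Theorem \ref{thm:predictive_CRM} and Bayes' rule. By construction, the conditional law of $(\tilde{\eta}_i)_{i\ge 1}$ given $\Psi_1 = a$ is $\Lcr(\,\cdot\,,a)$, i.e.\ the regular conditional law of $(\Delta_{i+1}/\Delta_1)_{i\ge 1}$ given $\Delta_1 = a$ for the background CRM with L\'evy intensity $\lambda(s)\,\D s\,P(\D w)$. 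Standard order-statistics theory for Poisson point processes yields that, conditionally on the largest jump $\Delta_1 = a$, the configuration $(\Delta_{i+1})_{i\ge 1}$ is a Poisson point process on $(0,a)$ with intensity $\lambda(s)\,\D s$; applying the mapping theorem to $s\mapsto s/a$ shows that $(\Delta_{i+1}/a)_{i\ge 1}$ is a Poisson point process on $(0,1)$ with intensity $a\lambda(as)\,\D s$. Since $(\tilde{w}_i)_{i\ge 1}$ are i.i.d.\ from $P$ and independent of the atoms, one concludes that, conditionally on $\Psi_1 = a$, the random measure $\tilde{\mu}_{\Psi_1}$ is a CRM on $\W\times(0,1)$ with L\'evy intensity $a\lambda(as)\,\D s\,P(\D w)$; the integrability conditions \eqref{eq:integrability} are inherited from \eqref{eq:levy_cond}.

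With this reduction, \eqref{eq:pred_SP} follows by applying Theorem \ref{thm:predictive_CRM} to the conditional CRM with L\'evy density $s\mapsto a\lambda(as)\mathds{1}_{(0,1)}(s)$: the Poisson rate for $Y_{n+1}$ becomes $\int_0^1 s(1-s)^n a\lambda(as)\,\D s$, while each Bernoulli parameter $p^*_i(\Psi_1)$ is the ratio in the statement, with the prefactor $a$ cancelling between numerator and denominator. For \eqref{eq:posterior_jump}, I would invoke Bayes' rule, $f_{\Psi_1 \mid \bm{Z}_n}(a) \propto f_{\Psi_1}(a)\,\mathcal{L}(\bm{Z}_n\mid \Psi_1 = a)$, and compute the marginal likelihood by integrating the Bernoulli-product likelihood against the conditional CRM via the Laplace functional \eqref{eq:laplace}. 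This gives the feature-allocation factorization
\begin{equation*}
\mathcal{L}(\bm{Z}_n \mid \Psi_1=a) \propto \exp\!\Bigl\{-\int_0^1 \bigl(1-(1-s)^n\bigr)\,a\lambda(as)\,\D s\Bigr\}\,\prod_{i=1}^k \int_0^1 s^{m_i}(1-s)^{n-m_i}\,a\lambda(as)\,\D s,
\end{equation*}
up to factors independent of $a$, which is \citet[Proposition 3.2]{james2017bayesian} specialised to the rescaled intensity. The elementary identity $1-(1-s)^n = s\sum_{j=0}^{n-1}(1-s)^j$ then rewrites the exponent as $\sum_{i=1}^n \int_0^1 s(1-s)^{i-1}a\lambda(as)\,\D s$, matching \eqref{eq:posterior_jump}.

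The main technical obstacle is the first paragraph's conditional identification of $\tilde{\mu}_{\Psi_1}$ given $\Psi_1 = a$: one must invoke the order-statistics description of the underlying Poisson point process together with the change of variables under rescaling by $\Delta_1$, taking care that the integrability of $a\lambda(as)\,\D s$ on $(0,1)$ still holds under \eqref{eq:levy_cond}. Once this identification is secured, both \eqref{eq:pred_SP} and \eqref{eq:posterior_jump} reduce to bookkeeping calculations that directly leverage results already available for CRM priors.
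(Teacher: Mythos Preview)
Your proposal is correct and rests on the same underlying idea as the paper: condition on $\Psi_1$ to reduce the SP to a CRM, then invoke the CRM predictive machinery of Theorem~\ref{thm:predictive_CRM}. The difference is one of presentation rather than substance. The paper's proof imports the conditional posterior representation directly from \citet[Proposition~2 and Equation~(4)]{camerlenghi2021scaled} and then repeats the Laplace-functional computation from the proof of Theorem~\ref{thm:predictive_CRM}. You instead derive the conditional identification from first principles, using the order-statistics description of the Poisson point process of jumps (conditionally on $\Delta_1=a$, the remaining jumps form a PPP on $(0,a)$ with intensity $\lambda(s)\,\D s$, hence after rescaling a PPP on $(0,1)$ with intensity $a\lambda(as)\,\D s$). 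This makes your argument self-contained and transparent about \emph{why} the SP, given $\Psi_1$, behaves as a CRM with the rescaled L\'evy density, whereas the paper's version is shorter but relies on the reader consulting the cited source. Your Bayes-rule derivation of \eqref{eq:posterior_jump} via the marginal likelihood and the telescoping identity $1-(1-s)^n=s\sum_{j=0}^{n-1}(1-s)^j$ is exactly what underlies the exponential factor $\prod_{i=1}^n e^{-\phi_i(a)}$ that the paper uses in the proofs of Theorems~\ref{thm:characterization_n} and~\ref{camerlenghi2021scaled}.
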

\begin{proof}
The representation of the predictive distribution \eqref{eq:pred_SP} follows from \citet[Proposition 2]{camerlenghi2021scaled}. Indeed the posterior distribution of the largest jump directly follows from \cite[Equation (4)]{camerlenghi2021scaled}. In addition \cite[Proposition 2]{camerlenghi2021scaled} shows that the conditional distribution of $Z_{n+1}$, given $\bm{Z}_n$ and $\Psi_1$, equals the distribution of the following counting measure 
\begin{equation}
\label{eq:posterior_representation_SP}
Z_{n+1}'+  \sum_{i=1}^{K_n} A_{n+1,i}^*  \delta_{W_i^*},
\end{equation}
where $Z_{n+1}' | \tilde{\mu}' = \sum_{i \geq 1} A_{n+1,i}' \delta_{\tilde{w}_i'} \sim {\rm BeP} (\tilde{\mu}_{\Psi_1}')$ and $\tilde{\mu}_{\Psi_1}'$ is a CRM with  L\'evy intensity of the form
\[
\nu_{\Psi_1}' (\D w, \D s ) =  (1-s)^n  \Psi_1 \lambda (\Psi_1 s) \indic_{(0,1)} (s)  \D s P (\D w).
\] 
Moreover $A_{n+1,1}^*, \ldots , A_{n+1,K_n}^*$ are Bernoulli random variables with parameters $J_1, \ldots , J_{K_n}$, respectively, such that conditionally on $\Psi_1$ each $J_i$ has distribution with density function of the form
\[
f_{J_i|\Psi_1} (s) \propto  (1-s)^{n-m_i} s^{m_i}  \Psi_1 \lambda (\Psi_1 s)  \quad  \text{on } (0,1).
\]
As in the proof of Theorem \ref{thm:predictive_CRM}, we show that the distribution of $Y_{n+1}| ( \Psi_1, \bm{Z}_n)$
equals $\sum_{i \geq 1} A_{n+1,i}'$. Thus, by the evaluation of the Laplace functional, one may easily realize that the last random sum has a  
Poisson distribution with mean $\int_0^1   (1-s)^n s  \Psi_1 \lambda (\Psi_1 s)   \D s $.  Moreover by exploiting the posterior representation \eqref{eq:posterior_representation_SP}, the variables   $A_{n+1,i}^*$, as $i = 1, \ldots , K_n$, conditionally on 
$\bm{Z_n}$ and $\Psi$, are independent and  Bernoulli distributed with mean 
\[
\E [J_i | \Psi_1] =  \int_0^1  s f_{J_i| \Psi_1} (s) \D s =  \frac{\int_0^1   (1-s)^{n-m_i} s^{m_i+1} \Psi_1\lambda (s \Psi_1)  \D s   }{\int_0^1  
(1-s)^{n-m_i} s^{m_i} \Psi_1\lambda (s \Psi_1)  \D s } .
\]
\end{proof}
\begin{Remark}
According to \eqref{eq:posterior_jump}, the conditional distribution of $\Psi_1$, given $\bm{Z}_{n}$ may include the whole sampling information, depending on the specification of $\nu$ and $f_{\Psi_1}$, and hence the conditional distribution of $Y_{n+1}$, given $\bm{Z}_n$, may also include such a sampling information. As a corollary of Theorem \ref{thm:predictive_SP}, the conditional distribution of $Y_{n+1}$, given $\bm{Z}_n$, is a mixture of Poisson distributions that may include the whole sampling information; in particular, the amount of sampling information in the posterior distribution is uniquely determined by the mixing distribution, namely by the conditional distribution of $\Psi_1$, given $\bm{Z}_{n}$.
\end{Remark}
Hereafter, we specialize Theorem \ref{thm:predictive_SP} for the stable SP, that is a peculiar SP defined through a CRM with a L\'evy intensity $\nu$ such that $\lambda (s) =  \sigma s^{-1-\sigma}$ for a parameter $\sigma \in (0,1)$. We refer to \citet{camerlenghi2021scaled} for a detailed posterior analysis of the stable SP prior.

\begin{Corollary}
\label{cor:predictive_SSP}
Let $Z_1, Z_2, \ldots $ be exchangeable random variables modeled as in \eqref{eq:feature_ex}, where $\Mcr$ equals ${\rm SP} (\nu, f_{\Psi_1})$, with $\lambda(s) = \sigma s^{-1-\sigma}$ for some $\sigma \in (0,1)$. If $\bm{Z}_n$ is a random sample which displays $K_{n}=k$ distinct features $\{W_{1}^{*},\ldots,W^{*}_{K_{n}}\}$, and feature $W_i^*$ appears exactly $M_{n,i}=m_i$ times in the samples, as $i =1, \ldots , K_n$, then the
conditional distribution of $\Psi_1$, given $\bm{Z}_n$, has posterior density:
\begin{equation}
\label{eq:posterior_jump_stable}
f_{\Psi_1| \bm{Z}_n} (a) \propto a^{-k \sigma} e^{-\sigma a^{-\sigma} \sum_{i = 1}^n B(1-\sigma, i)}  f_{\Psi_1} (a)
\end{equation}
having denoted by $B(\, \cdot\,  , \, \cdot \, )$ the classical Euler Beta function. Moreover, conditionally on $\bm{Z}_n$ and $\Psi_1$,
\begin{equation}
\label{eq:pred_stable}
\begin{split}
&\P ((Y_{n+1}, A_{n+1,1}^*, \ldots , A_{n+1,K_n}^*)= (y, a_1, \ldots , a_{K_n}) | \bm{Z}_n, \Psi_1) \\
& \qquad \qquad =  {\rm Poiss} \left( y; \sigma \Psi_1^{-\sigma} B (1-\sigma, n+1) \right) \prod_{i=1}^k {\rm Bern} \left( a_i; \frac{m_i-\sigma}{n-\sigma+1}\right) .
\end{split}
\end{equation}
\end{Corollary}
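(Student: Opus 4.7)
The plan is to derive Corollary 3.2 as a direct specialization of Theorem 3.2 by plugging in $\lambda(s) = \sigma s^{-1-\sigma}$ and evaluating the resulting integrals as Euler Beta functions. The key observation is that with this choice of $\lambda$, the composition $a \lambda(as) = \sigma a^{-\sigma} s^{-1-\sigma}$ factorizes into a function of $a$ alone times a function of $s$ alone, which makes every relevant integral tractable.

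First, I would compute the Poisson mean in \eqref{eq:pred_SP}. Substituting $\lambda(s) = \sigma s^{-1-\sigma}$ and pulling the $\Psi_1^{-\sigma}$ out gives
\begin{equation*}
\int_0^1 s\, \Psi_1 (1-s)^n \lambda(s\Psi_1)\, \D s = \sigma \Psi_1^{-\sigma} \int_0^1 s^{-\sigma}(1-s)^n \D s = \sigma \Psi_1^{-\sigma} B(1-\sigma, n+1),
\end{equation*}
which matches the Poisson parameter in \eqref{eq:pred_stable}. Next, for each $i = 1, \ldots, K_n$, I would compute $p_i^*(\Psi_1)$. The factor $\sigma \Psi_1^{-1-\sigma}$ cancels between numerator and denominator, so
\begin{equation*}
p_i^*(\Psi_1) = \frac{B(m_i + 1 - \sigma,\, n - m_i + 1)}{B(m_i - \sigma,\, n - m_i + 1)} = \frac{m_i - \sigma}{n + 1 - \sigma},
\end{equation*}
where I used the identity $\Gamma(m_i + 1 - \sigma) = (m_i - \sigma) \Gamma(m_i - \sigma)$ and $\Gamma(n + 2 - \sigma) = (n+1-\sigma)\Gamma(n+1-\sigma)$. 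This yields the Bernoulli parameters in \eqref{eq:pred_stable}.

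Finally, for the posterior density of $\Psi_1$, I would plug $\lambda(as) = \sigma (as)^{-1-\sigma}$ into \eqref{eq:posterior_jump}. Each inner integral in the exponent becomes
\begin{equation*}
\int_0^1 s(1-s)^{i-1} a\lambda(as)\, \D s = \sigma a^{-\sigma} \int_0^1 s^{-\sigma}(1-s)^{i-1}\D s = \sigma a^{-\sigma} B(1-\sigma, i),
\end{equation*}
and each factor in the product becomes
\begin{equation*}
\int_0^1 s^{m_i}(1-s)^{n-m_i} a\lambda(as)\, \D s = \sigma a^{-\sigma} B(m_i - \sigma,\, n - m_i + 1).
\end{equation*}
The Beta functions $B(m_i - \sigma, n - m_i + 1)$ do not depend on $a$ and can be absorbed into the proportionality constant, leaving the $a^{-k\sigma}$ prefactor from the $k$ factors of $a^{-\sigma}$ in the product. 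Combining with the exponential and the prior density $f_{\Psi_1}(a)$ yields \eqref{eq:posterior_jump_stable}.

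The whole argument is essentially a bookkeeping exercise, so there is no genuine obstacle; the only point where one must be careful is the cancellation of the $\Psi_1$-dependent factors in the Bernoulli parameter (which makes $p_i^*(\Psi_1)$ in fact independent of $\Psi_1$ in the stable case) and the factorization of $a\lambda(as)$ that produces the clean $a^{-k\sigma}$ dependence in \eqref{eq:posterior_jump_stable}.
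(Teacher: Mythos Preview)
Your proposal is correct and follows exactly the approach the paper indicates: the paper's own proof is the single line ``a plain application of Theorem \ref{thm:predictive_SP} under the choice $\lambda(s)=\sigma s^{-1-\sigma}$,'' and you have simply carried out those Beta-integral computations explicitly. Your use of $(1-s)^{i-1}$ in the exponent (rather than the $(1-s)^{n-1}$ appearing in \eqref{eq:posterior_jump}, which is evidently a typo given both the stated result \eqref{eq:posterior_jump_stable} and the form $\phi_i(a)=\int_0^1 s(1-s)^{i-1}a\lambda(as)\,\D s$ used later in the proof of Theorem \ref{thm:characterization_n}) is the right reading.
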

\begin{proof}
The proof is a plain application of Theorem \ref{thm:predictive_SP} under the choice $\lambda (s) = \sigma s^{-1-\sigma}$.
\end{proof}


\section{Predictive characterizations for SPs}  \label{sec:SP_predictive}

In this section, we introduce and discuss Johnson's ``sufficientness'' postulates in the context of features-sampling models, under the class of SP priors. According to Theorem \ref{thm:predictive_CRM}, if the features-sampling model is a CRM prior, then the conditional distribution of $Y_{n+1}$, given $\bm{Z}_n$, is a Poisson distribution that depends on the sampling information $\bm{Z}_n$ only through the sample size $n$. Moreover, the conditional probability of generating an old feature $W_{i}^*$, given $\bm{Z}_n$, depends on the sampling information $\bm{Z}_n$ only through $n$ and $m_i$. As shown in Theorem \ref{thm:predictive_SP}, SP priors enrich the predictive structure of CRM priors through the conditional distribution of the latent variable $\Psi_1$, given the observable sample $\bm{Z}_n$. In the next theorem, we characterize the class of SP priors for which the conditional distribution of $Y_{n+1}$, given $\bm{Z}_n$, depends on the sampling information only through $n$. 
\begin{Theorem} \label{thm:characterization_n}
Let $Z_1, Z_2, \ldots $ be exchangeable random variables modeled as in \eqref{eq:feature_ex}, where $\Mcr$ equals ${\rm SP} (\nu, f_{\Psi_1})$, being $\nu (\D w, \D s) = \lambda (\D s) \D s P (\D w)$. Moreover, suppose that  $\bm{Z}_n$ is a random sample which displays $K_{n}=k$ distinct features $\{W_{1}^{*},\ldots,W^{*}_{K_{n}}\}$, and feature $W_i^*$ appears exactly $M_{n,i}=m_i$ times in the samples, as $i =1, \ldots , K_n$. If $f_{\Psi_1}: (0, r) \to \R^+$ is a continuous function on the compact support $(0,r)$, with $r >0$, and the function $\lambda : \R^+ \to \R^+$ is continuous on its domain, then the conditional distribution of the latent variable $\Psi_1$ given $\bm{Z}_n$, depends on the sampling information $\bm{Z}_n$ only through $n$ if and only if $\lambda (s) = C s^{-1}$ on $(0,r)$ for some constant $C>0$.
\end{Theorem}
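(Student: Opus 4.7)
My plan is to work directly from the explicit posterior density \eqref{eq:posterior_jump} in Theorem \ref{thm:predictive_SP} and isolate the portion that carries the dependence on $(k,m_1,\ldots,m_k)$ beyond $n$. Setting $F_m(n,a) := \int_0^1 s^m(1-s)^{n-m}\,a\lambda(as)\,\D s$, formula \eqref{eq:posterior_jump} may be rewritten, as a function of $a$, as
\[
f_{\Psi_1|\bm{Z}_n}(a)\;\propto\; e^{-n\int_0^1 s(1-s)^{n-1}a\lambda(as)\,\D s}\,f_{\Psi_1}(a)\,\prod_{i=1}^k F_{m_i}(n,a).
\]
Since the exponential factor and $f_{\Psi_1}(a)$ already depend on $\bm{Z}_n$ only through $n$, the claim reduces to characterising when $\prod_{i=1}^k F_{m_i}(n,a)$ depends on $(k,m_1,\ldots,m_k)$ only through a factor that is independent of $a$.

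For the sufficiency (``if'') direction, I would substitute $\lambda(s)=Cs^{-1}$ on $(0,r)$: since $a\in(0,r)$ and $s\in(0,1)$ imply $as\in(0,r)$, one has $a\lambda(as)=C/s$, whence $F_m(n,a)=C\,B(m,n-m+1)$ and $n\int_0^1 s(1-s)^{n-1}a\lambda(as)\,\D s=C$, both constant in $a$. Hence $f_{\Psi_1|\bm{Z}_n}(a)\propto f_{\Psi_1}(a)$, and the posterior depends on $\bm{Z}_n$ only through $n$ (indeed not at all).

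For the necessity (``only if'') direction, I would compare two sample configurations sharing $n=2$, both of strictly positive probability under ${\rm SP}(\nu,f_{\Psi_1})$: configuration A with $k=1$, $m_1=1$, and configuration B with $k=2$, $m_1=m_2=1$. Equating the normalised posteriors and cancelling the common exponential factor together with $f_{\Psi_1}(a)$ on $(0,r)$ yields $F_1(2,a)=K$ for some constant $K>0$. After the change of variables $u=as$ this reads
\[
a\int_0^a u\lambda(u)\,\D u-\int_0^a u^2\lambda(u)\,\D u=K a^2,\qquad a\in(0,r).
\]
Differentiating once in $a$ gives $\int_0^a u\lambda(u)\,\D u=2Ka$, and a further differentiation yields $a\lambda(a)=2K$, so $\lambda(a)=Ca^{-1}$ on $(0,r)$ with $C=2K$.

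The main obstacle is the setup of the necessity argument: one must verify that configurations A and B both occur with strictly positive probability under the SP prior, so that the comparison of their posterior densities is genuinely informative, and justify the cancellation of $f_{\Psi_1}$ throughout its full support $(0,r)$. Once the identity $F_1(2,a)=K$ is secured, the ODE manipulation is very short, and the continuity hypothesis on $\lambda$ legitimises the successive differentiations of the integrals.
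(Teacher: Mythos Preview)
Your proposal is correct and follows essentially the same strategy as the paper: isolate the factor $\prod_{i=1}^{k} F_{m_i}(n,a)$ in \eqref{eq:posterior_jump}, compare two admissible sample configurations that differ only in $k$ to force a single $F_m(n,\cdot)$ to be constant in $a$, and then differentiate to obtain $\lambda(a)=C a^{-1}$. The only cosmetic difference is the choice of configurations---the paper takes $m_1=\cdots=m_k=n$ and compares $k=1$ with general $k$ to obtain $\int_0^1 s^{n} a\lambda(as)\,\D s=\text{const}$, whereas you take $n=2$, $m_i=1$ and compare $k=1$ with $k=2$ to obtain $\int_0^1 s(1-s)\,a\lambda(as)\,\D s=\text{const}$; the ensuing differentiation is the same in both cases.
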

\begin{proof}
First of all, if $f_{\Psi_1}$ is defined on the compact support $(0,r)$ and if
 $\lambda (s) = C s^{-1}$ on $(0,r)$ for some constant $C>0$, then  it is easy to see that the posterior distribution of $\Psi_1$ in \eqref{eq:posterior_jump} depends only on $n$ and not on the other sample statistics. We now show the reverse implication. 
The posterior density of $\Psi_1$, conditionally on $\bm{Z}_n$, satisfies  \eqref{eq:posterior_jump} and it is proportional to
\begin{equation*}
f_{\Psi_1| \bm{Z}_n} (a) \propto \prod_{i=1}^n  e^{-\phi_i (a)}   \prod_{i=1}^{K_n} \int_0^1    s^{m_i} (1-s)^{n-m_i} a \lambda (a s) \D s \: 
f_{\Psi_1} (a),
\end{equation*}
where $\phi_i (a) = \int_0^{1} s (1- s)^{i-1}  a \lambda(a s)\D s$. Then, there exists $c(m_1, \ldots , m_k, k,n)$ such that it holds
\begin{equation} \label{eq:char1_1}
f_{\Psi_1| \bm{Z}_n} (a) = \frac{\prod_{i=1}^n  e^{-\phi_i (a)}   \prod_{i=1}^{K_n} \int_0^1    s^{m_i} (1-s)^{n-m_i} a \lambda (a s) \D s \: 
f_{\Psi_1} (a)}{c(m_1, \ldots , m_k, k,n)}.
\end{equation}
Because of the assumptions imposed, the distribution of $\Psi_1| \bm{Z}_n$ does not depend on $K_n$ neither on the corresponding sample frequencies $M_{n,1}, \ldots , M_{n,K_n}$. Accordingly, the function
\begin{equation}
\label{eq:char1_2} 
f_1 (a,n):=  f_{\Psi_1| \bm{Z}_n}^{-1} (a)  \prod_{i=1}^n e^{-\phi_i (a)} , \quad  a \in (0,r),
\end{equation}
depends only on $a$ and $n$ but not on $k$ and $(m_1, \ldots , m_k)$. Then, putting together \eqref{eq:char1_1}-\eqref{eq:char1_2}, it holds
\begin{equation}
\label{eq:char1_3}
f_1 (a,n) \cdot \prod_{i=1}^k  \int_0^1  s^{m_i} (1-s)^{n-m_i} a \lambda (as) \D s = c (m_1, \ldots , m_k, n,k) \quad \forall a \in (0,r),
\end{equation}
where $c$ is the normalizing factor and it does not depend on the variable $a$. By choosing $m_1= \ldots = m_k = n \in \N$, thanks to  Equation 
\eqref{eq:char1_3}, we can state that the following function
\begin{equation}
\label{eq:char1_4}
f_1 (a,n) \left( \int_0^1  s^n  a \lambda (as) \D s  \right)^k,
\end{equation}
which is defined for any $a \in (0,r)$, does not depend on $a$, but only on $k$ and $n$. Since the previous assertion is true for any $k \geq 1$, one may select $k=1$, thus 
obtaining the following identity
\begin{equation}
\label{eq:f1}
f_1 (a,n) = c^* \left( \int_0^1  s^n  a \lambda (as) \D s  \right)^{-1}
\end{equation}
for some constant $c^*$, independent of $a$, but that may depend on $n$. Substituting \eqref{eq:f1} in \eqref{eq:char1_4}, we obtain that
\begin{equation}
\label{eq:char1_5}
 c^* \left( \int_0^1  s^n  a \lambda (as) \D s  \right)^{k-1} 
\end{equation}
is a function which does not depend on $a$, but only on $n$ and $k$. As a consequence, we have that 
\[
\int_0^1  s^n  a \lambda (as) \D s =  \int_0^a  \frac{s^n}{a^n} \lambda (s) \D s = C^{**}
\]
for a suitable constant $C^{**}$, which does not depend on $a \in (0,r)$. To conclude, we take a derivative of the previous expression with respect to $a$, and this allows to show that
\[
a^n \lambda (a) =  n a^{n-1} C^{**},
\]
namely $\lambda (a) = C /a $, for $a \in (0,r)$, where $C$ is a positive constant. This is a L\'evy intensity, indeed it satisfies the condition \eqref{eq:integrability_homogeneous}. Outside the interval $(0,r)$, $\lambda$ may be defined arbitrarily, indeed the values of $\lambda$ on $[ r +\infty)$ do not affect the posterior distribution of $\Psi_1$ \eqref{eq:posterior_jump}.
\end{proof}
\begin{Remark}
Note that in Theorem \ref{thm:characterization_n} we have supposed that $f_{\Psi_1}$ has a compact support on $(0,r)$, thus we are interested to define $\lambda$ on $(0,r)$; outside the interval $\lambda$ can be defined arbitrarily,  because it does not affect the posterior distribution \eqref{eq:posterior_jump} of $\Psi_1$.
From the proof of Theorem \ref{thm:characterization_n}, it becomes apparent that if the support of $f_{\Psi_1}$ is the entire positive real line $\R^+$,  the posterior distribution of the largest jump depends only on $n$ if and only if $\lambda (s) = C s^{-1}$ on $\R^+$ for some constant $C>0$. However, in this case $\lambda$ does not meet the integrability  condition \eqref{eq:integrability_homogeneous}, hence this can only considered a limiting case. It is interesting to observe that such a limiting situation, with the additional assumption $f_{\Psi_1}= f_{\Delta_1}$, corresponds to the Beta process case with $\sigma =0$ and $c=1$ (\citet{Gri_11}).
\end{Remark}
 Now, we characterize SPs for which the posterior distribution of $\Psi_1$ depends only on $n$ and $K_n$, but not on the sample frequencies of the different features $\bm{m}$. Here we assume that $f_{\Psi_1}$ has full support a priori. The following characterization has been provided in \citet[Theorem 3]{camerlenghi2021scaled}, but for completeness we report the proof.

\begin{Theorem}\label{camerlenghi2021scaled}[\citet{camerlenghi2021scaled}]
Let $Z_1, Z_2, \ldots $ be exchangeable random variables modeled as in \eqref{eq:feature_ex}, where $\Mcr$ equals ${\rm SP} (\nu, f_{\Psi_1})$, being $\nu (\D w, \D s) = \lambda (\D s) \D s P (\D w)$. Suppose that  $\bm{Z}_n$ is a random sample which displays $K_{n}=k$ distinct features $\{W_{1}^{*},\ldots,W^{*}_{K_{n}}\}$, and feature $W_i^*$ appears exactly $M_{n,i}=m_i$ times in the sample, as $i =1, \ldots , K_n$. If $f_{\Psi_1}: \R^+ \to \R^+$ is a strictly positive function on $\R^+$ and continuously differentiable, and $\lambda$ is continuously differentiable, then the conditional distribution of the latent variable $\Psi_1$, given $\bm{Z}_n$, depends on $\bm{Z}_n$ only through $n$ and $K_n$ if and only if $\lambda (s) = C s^{-1-\sigma}$ on $\R^+$ for some constant $C>0$ and $\sigma \in (0,1)$.
\end{Theorem}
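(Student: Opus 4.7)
The plan is to argue in two directions, mirroring the strategy used for Theorem \ref{thm:characterization_n}. For sufficiency, I would substitute $\lambda(s)=Cs^{-1-\sigma}$ into the posterior density \eqref{eq:posterior_jump} and evaluate the resulting beta integrals explicitly: $\int_0^1 s^{m_i}(1-s)^{n-m_i}a\lambda(as)\D s=Ca^{-\sigma}B(m_i-\sigma,n-m_i+1)$, and $\int_0^1 s(1-s)^{i-1}a\lambda(as)\D s=Ca^{-\sigma}B(1-\sigma,i)$. The $m_i$'s then factor into a purely combinatorial constant, and the $a$-dependence collapses to $a^{-k\sigma}\exp(-C'(n)a^{-\sigma})f_{\Psi_1}(a)$, depending on $\bm{Z}_n$ only through $n$ and $k$. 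This mimics the computation behind Corollary \ref{cor:predictive_SSP}.

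For the converse, set $g(m,n;a):=\int_0^1 s^m(1-s)^{n-m}a\lambda(as)\D s$. From \eqref{eq:posterior_jump},
\[
f_{\Psi_1\mid\bm{Z}_n}(a)=\frac{e^{-\sum_{i=1}^n \phi_i(a)}\,f_{\Psi_1}(a)\prod_{i=1}^{k}g(m_i,n;a)}{c(m_1,\ldots,m_k,n,k)}.
\]
Requiring this density to depend on $\bm{Z}_n$ only through $(n,k)$ and specializing to $k=1$ forces, for every admissible $m$, the factorization $g(m,n;a)=c(m,n)\,q(a,n)$ with $q$ independent of $m$. Equivalently, the ratio $\beta(m,n):=g(m,n;a)/g(n,n;a)$ is a constant in $a$. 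The strict positivity of $f_{\Psi_1}$ on all of $\R^+$ is what makes this separation valid globally, in contrast with the compact-support scenario of Theorem \ref{thm:characterization_n}.

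Next, I would translate this into a differential equation for $\lambda$. The change of variables $u=as$ gives $a^n g(m,n;a)=\int_0^a u^m(a-u)^{n-m}\lambda(u)\D u=:G_{n,m}(a)$, so $G_{n,m}(a)=\beta(m,n)G_{n,n}(a)$ for all $a>0$. Differentiating in $a$ yields $(n-m)G_{n-1,m}(a)=\beta(m,n)\,a^n\lambda(a)$ for $m<n$, and applying the separation property at level $n-1$ gives $G_{n-1,m}(a)=\beta(m,n-1)\int_0^a u^{n-1}\lambda(u)\D u$. Combining and differentiating once more produces
\[
(n-m)\beta(m,n-1)\,a^{n-1}\lambda(a)=\beta(m,n)\bigl[n\,a^{n-1}\lambda(a)+a^n\lambda'(a)\bigr],
\]
i.e.\ $a\lambda'(a)/\lambda(a)$ equals the constant $(n-m)\beta(m,n-1)/\beta(m,n)-n$. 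Calling this constant $-1-\sigma$ and integrating yields $\lambda(a)=Ca^{-1-\sigma}$ on $\R^+$. The L\'evy integrability condition \eqref{eq:integrability_homogeneous} at both $0$ and $\infty$ then forces $\sigma\in(0,1)$ and $C>0$.

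The main obstacle is the separation-of-variables step: one must carefully track which quantities depend on $(a,m,n)$ to deduce that $g(m,n;a)$ truly factors as $c(m,n)q(a,n)$, and the subsequent ODE extraction requires juggling two adjacent levels $n$ and $n-1$ simultaneously to cancel the unknown proportionality constants $\beta(m,\cdot)$ and isolate the logarithmic derivative $\lambda'(a)/\lambda(a)$. The hypothesis that $f_{\Psi_1}$ has full support and is continuously differentiable, together with continuous differentiability of $\lambda$, is exactly what is needed to justify the differentiations and to rule out pathological solutions.
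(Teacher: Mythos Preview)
Your proposal is correct and follows essentially the same route as the paper: isolate the factor $g(m,n;a)=\int_0^1 s^m(1-s)^{n-m}a\lambda(as)\,\D s$, use the hypothesis to force $g(m,n;a)/g(n,n;a)$ to be constant in $a$, and then differentiate to extract an ODE $a\lambda'(a)/\lambda(a)=\text{const}$.

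The only noteworthy difference is in how the ODE is reached. The paper stays at a single sample size $n$: taking $m=n-1$ gives $\int_0^1 s^{n-1}\lambda(as)\,\D s=(1+c^{**})\int_0^1 s^{n}\lambda(as)\,\D s$, and two differentiations in $a$ yield $\lambda(a)(1-nc^{**})=c^{**}a\lambda'(a)$. You instead invoke the separation property at the adjacent level $n-1$ (i.e.\ $G_{n-1,m}(a)=\beta(m,n-1)G_{n-1,n-1}(a)$) and differentiate once at each level. Both bookkeeping choices lead to the same Euler equation; the paper's variant is marginally more economical because it uses the hypothesis only for one fixed $n$, whereas yours tacitly uses it for both $n$ and $n-1$. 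Since the theorem is meant to hold for all sample sizes (the phrase ``only through $n$'' presupposes that $n$ varies), this is a harmless difference.
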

\begin{proof}
By arguing as in the proof of Theorem \ref{thm:characterization_n}, the posterior density of $\Psi_1$, given $\bm{Z}_n$, is proportional to
\begin{equation*}
\prod_{i=1}^n  e^{-\phi_i (a)}   \prod_{i=1}^{k} \int_0^1    s^{m_i} (1-s)^{n-m_i} a \lambda (a s) \D s \: 
f_{\Psi_1} (a),
\end{equation*}
where  $\phi_i (a) = \int_0^{1} s (1- s)^{i-1}  a \lambda(a s)\D s$. Then, there exists $c (m_1, \ldots , m_k, n,k)$ such that it holds
\begin{equation*}
f_{\Psi_1 | \bm{Z}_n} (a) = \frac{\prod_{i=1}^n  e^{-\phi_i (a)}  \prod_{i=1}^{k} \int_0^1    s^{m_i} (1-s)^{n-m_i} a \lambda (a s) 
\D s \: 
f_{\Psi_1}(a)}{c (m_1, \ldots , m_k, n,k)}.
\end{equation*}
As a consequence, 
\begin{equation} \label{eq:char2_1}
f_{\Psi_1 | \bm{Z}_n}^{-1} (a) \prod_{i=1}^n  e^{-\phi_i (a)}  \prod_{i=1}^{k} \int_0^1    s^{m_i} (1-s)^{n-m_i} a \lambda (a s)
 \D s\: f_{\Psi_1} (a)= c (m_1, \ldots , m_k, n,k).
\end{equation}
If the density function $f_{\Psi_1 | \bm{Z}_n} (a)$ does not depend on $m_1, \ldots, m_k$, then the following function
\[
    f_{\Psi_1 | \bm{Z}_n}^{-1} (a) \prod_{i=1}^n  e^{-\phi_i (a)} \: f_{\Psi_1}(a) = f_1(a,k,n)
\]
depends only on $k,n$ and $a$, but not  on the frequency counts. Therefore, \eqref{eq:char2_1} boils down to
\begin{equation} \label{eq:thm_char_2}
f_1 (a,k,n) \cdot  \prod_{i=1}^{k} \int_0^1  s^{m_i} (1-s)^{n-m_i} a \lambda (a s) \D s =  c (m_1, \ldots , m_k, n,k).
\end{equation}
where the function on the right hand side of \eqref{eq:thm_char_2} is independent of $a$, for any choice of the vector of sampling information $(m_1, \ldots , m_k, n,k)$. Now, since the vector $(m_1, \ldots , m_k, n,k)$ can be chosen arbitrarily, we can make the choice $m_1 = \cdots = m_k=m >0$, such that the function
\begin{equation} \label{eq:thm:eq_w}
\left[ w(a,k,n) \int_0^1    s^{m} (1-s)^{n-m} a \lambda (a s) \D s \right]^{k}
\end{equation}
does not depend on $a\in \R^+$, where $w(a,k,n) = \sqrt[k]{f_1(a,k,n)}$. Moreover, suppose that $m= n$, thus
\begin{equation} \label{eq:thm_char_3}
w(a,k,n) \int_0^1 s^n a \lambda (as) \D s 
\end{equation}
does not depend on $a\in \R^+$, which implies that
\begin{equation}
\label{eq:w}
w(a,k,n)= c^* \left( \int_0^1  s^n a \lambda (as)  \D s  \right)^{-1}
\end{equation}
for a constant $c^* >0$ with respect to $a$, which can only depend on $k$ and $n$. By substituting \eqref{eq:w} in \eqref{eq:thm:eq_w}, we obtain
\begin{equation*} 
    \left[ \frac{c^*}{\int_0^1 s^n \lambda  (a s) \D s} \cdot \int_0^1    s^{m} (1-s)^{n-m}  \lambda (a s) \D s  \right]^{k},
\end{equation*}
which is independent of $a \in \R^+$. Now, it is possible to choose $m=n-1$ in the previous function. Therefore there exists a constant $c^{**}$ independent of $a$ such that the following identity holds
\[
\int_0^1 s^{n-1} \lambda (as) \D s   -\int_0^1 s^n \lambda (as) \D s  = c^{**}  \int_0^1  s^n \lambda (as  )\D s .
\]
By taking the derivative of the previous equation two times with respect to $a$, then one obtains
\[
\lambda (a) (1-n c^{**})= a \lambda' (a) c^{**} ,
\]
which is an ordinary differential equation in $\lambda$ that can be solved by separation of variables. In particular, we obtain
\begin{equation}
    \lambda (a) = C  a^{(1-nc^{**} )/c^{**}}, \quad\text{for } C >0. \label{eq:lambda_general}
\end{equation}
To conclude, observe that the exponent of $a$  in \eqref{eq:lambda_general} should satisfy the integrability  condition \eqref{eq:integrability_homogeneous} for homogeneous CRMs. Accordingly, it is easy to see that we must consider 
\[
\lambda (a) = C \frac{1}{a^{1+\sigma}}
\]
where $C >0$ and $\sigma \in (0,1)$. The reverse implication of the theorem is trivially satisfied, hence the proof is completed.
\end{proof}

We recall from Theorem \ref{thm:predictive_SP} that the conditional distribution of $\Psi_1$, given $\bm{Z}_{n}$ uniquely determines the amount of sampling information included in the conditional distribution of the number of new features $Y_{n+1}$, given $\bm{Z}_n$. Such a sampling information may range from the whole information, in terms of $n$, $K_{n}$, and $(M_{1,n},\ldots,M_{K_{n},n})$, to the sole information on the sample size $n$. According to Theorem \ref{camerlenghi2021scaled}, stable SP prior of Corollary \ref{cor:predictive_SSP} is the sole SP prior for which the conditional distribution of the number of new features $Y_{n+1}$, given $\bm{Z}_n$, depends on the sampling information $\bm{Z}_n$ only on $n$ and $K_n$. Moreover, according to Theorem \ref{thm:characterization_n}, the Beta process prior is the sole SP prior for which the conditional distribution of the number of new features $Y_{n+1}$, given $\bm{Z}_n$, depends on the sampling information $\bm{Z}_n$ only on $n$. In particular, Theorem \ref{thm:characterization_n} and Theorem \ref{camerlenghi2021scaled} show that the Beta process prior and the stable SP prior may be considered, to some extent, the feature sampling counterparts of the Dirichlet process prior the Pitman-Yor process prior.


\section{Discussion and conclusions}\label{discuss}

In this paper, we have introduced and discussed Johnson's ``sufficientness'' postulates in the context of features-sampling models. ``Sufficientness'' postulates have been investigated extensively in the context of species-sampling models, providing an effective classification of species-sampling models on the basis of the form of their corresponding predictive distributions. Here, we made a first step towards the problem of providing an analogous classification for features-sampling models. In particular, we obtained  Johnson's ``sufficientness'' postulates when the class of features-sampling models is restricted to the class of scaled process priors. However, the results presented in the paper remain preliminary, and do not provide at all a complete answer to the characterization problem within the general class of features-sapling models. This problem remains open.

Within the features-sampling setting, the predictive distribution is of the form \eqref{eq:predictive_general}, though for the purpose of providing ``sufficientness'' postulates one may focus on features-sampling models exhibiting a general predictive distribution of the following type
\begin{equation}
\label{eq:predictive_discussion}
\begin{split}
&\P  ((Y_{n+1}, A_{n+1,1}^*, \ldots , A_{n+1,K_n}^*) = (y, a_1, \ldots , a_{K_n}) | \bm{Z}_n)\\
& \qquad\qquad\qquad\qquad\qquad\qquad  =
g (y; n,k,\bm{m}) \prod_{i=1}^{k} f_i (a_i; n,k, \bm{m}).
\end{split}
\end{equation}
Note that \eqref{eq:predictive_discussion} is a probability distribution, and it must satisfy a consistency condition, as usual. Among all the features-sampling models whose predictive distribution can be written in the form \eqref{eq:predictive_discussion}, we are interested in characterizing nonparametric priors such that: i) the function $g$ depends on the sampling information only through $n$, and the function $f_i$ depends only on $(n,m_i)$;
ii) $g$ depends only on $(n,k)$ and $f_i$ depends only on $(n,m_i)$; iii) $g$ depends only on $(n,k)$ and $f_i$ depends only on $(n,k,m_i)$.
In our view, these characterizations may provide a complete picture of sufficientness postulates within the feature setting, and they are also fundamental to guide the selection of the prior distribution. We conjecture that CRMs are the nonparametric priors satisfying the characterization i), the SP with a stable L\'evy measure is an example of prior satisfying ii), and no examples satisfying iii) have been considered in the current literature. Results in this direction are in \citet{Battiston2018}, where the authors characterize exchangeable feature allocation probability function (\citet{broderick2013feature}) in product forms: this could be a stimulating point of departure to study the characterization problem
depicted above.

\section*{Acknowledgements}

This research received funding from the European Research Council (ERC) under the European Union's Horizon 2020 research and innovation programme under grant agreement No 817257. The authors gratefully acknowledge the financial support from the Italian Ministry of Education, University and Research (MIUR), ``Dipartimenti di Eccellenza" grant 2018-2022. FC is a member of the \textit{Gruppo Nazionale per l'Analisi Matematica, la Probabilit\`a e le loro Applicazioni} (GNAMPA) of the \textit{Istituto Nazionale di Alta Matematica} (INdAM).

\bibliographystyle{abbrvnat}
\bibliography{bibliography}

\end{document}